\newtheorem{theorem}{Theorem}[section]
\newtheorem{lemma}[theorem]{Lemma}
\newtheorem{definition}[theorem]{Definition}
\theoremstyle{remark}
\newtheorem{remark}[theorem]{Remark}
\numberwithin{equation}{section}
\keywords{Elliptic equations, Beltrami operators, quasiconformal mappings.}
\subjclass[2010]{30C62, 35J55}
\title
{{Locally invertible $\sigma$--harmonic mappings}}
\author[G. Alessandrini]
{Giovanni Alessandrini}
\address[G. Alessandrini]{Dipartimento di Matematica e Geoscienze, Università  di Trieste, Via Valerio 12/b, 34100 Trieste, Italia} 
\email[G. Alessandrini]{alessang@units.it}
\author[V. Nesi]
{Vincenzo Nesi}
\address[V.  Nesi]{Dipartimento di Matematica ``G. Castelnuovo", Sapienza, Università di Roma,
Piazzale A. Moro 2, 00185 Roma, Italy}
\email[V. Nesi]{nesi@mat.uniroma1.it}
\begin{document}

\begin{abstract}
We extend a classical theorem by H. Lewy to planar $\sigma$--harmonic mappings, that is mappings $U$ whose components $u^1$ and $u^2$ solve a divergence structure elliptic equation ${\rm div} (\sigma \nabla u^i)=0$ , for $i=1,2$. A similar result is established for pairs of solutions of certain second order  non--divergence equations.
%{\color{blue} per evitare un overfull...}
\end{abstract}
\maketitle
\begin{flushright}
\textit{\small Dedicato, in occasione del suo ottantacinquesimo compleanno, a Gianfausto,\\
mentore di V.N. e ``motore di ricerca'' precedente all'invenzione del web.
}\end{flushright}

\section{Introduction}
The fundamental properties of the zeroes of holomorphic functions tell us that, if $f$ is a non--constant holomorphic function near $0$ and $f^\prime (0)=0$, then
\[f(z)-f(0) = \chi(z)^m \ , \text{ in a neighborhood of } 0\ ,\]
where $m\ge2$ is an integer and $\chi$ is a conformal map which fixes the origin. Hence, $f$ is locally invertible if and only if $f^\prime\neq 0$.

Consider an open set $\Omega\subseteq \mathbb R^2$ and a mapping $U=(u^1,u^2):\Omega\to \mathbb R^2$ such that $\Delta u^i=0, i=1,2$. Such a mapping is called a (planar) harmonic mapping. If $u^2$ is {  a} harmonic conjugate of $u^1$, and we use the customary convention to identify points $x=(x_1, x_2)\in \mathbb R^2$ with complex numbers $z=x_1+ix_2$, then the mapping $U=u^1+i u^2$ is holomorphic and the condition $U^{\prime}(z)\neq 0$ reads $\det DU > 0$. 
A classical, remarkable, extension of the property that  holomorphic functions are locally injective if and only if $\det DU>0$,  was proven by Hans Lewy \cite{Lewy}.  If $U: \Omega \to D$ is a harmonic homeomorphism, then its Jacobian matrix $DU$ is nonsingular, see also Duren \cite{duren}. 

Notice that the limitation to two dimensions is natural in this context in view of the explicit counterexample by Wood
\cite{wood}  of a three dimensional harmonic homeomorphism which is not a diffeomorphism.

A partial generalization of Lewy's result was obtained by the authors for invertible, sense preserving, $\sigma$--harmonic mappings $U$, that is maps whose components $u^1$ and $u^2$ solve a divergence structure elliptic equation 
\begin{equation}\label{dvrg}{\rm div} (\sigma \nabla u^i)=0\ , i=1,2\ , 
\end{equation}
when $\sigma$ is a strictly positive definite matrix with $L^{\infty}$ entries. In such a case, since solutions are differentiable in the weak sense only, the best possible result is that 
\[ {\rm det}DU > 0 \text{ almost everywhere} , \]
see \cite{ANARMA} and also \cite{ANFINNICO},  where  non--symmetric coefficient matrices  are taken into account.
%{\color{blue} Qui manterrei "sense preserving" e la disuguaglianza col $>$, perché con derivate discontinue, qualcuno potrebbe sospettare che il det Jacobiano si diverta a cambiare di segno... . Per il resto ridurrei i riferimenti all'orientazione della mappa.}

It is worth recalling that $U$ may fail to be a diffeomorphism when $\sigma$ is discontinuous just at a single point. The following example is taken from Meyers \cite{meyers}.

Let $\alpha>0$  and let us consider
\begin{equation*}
\sigma(x)=
\left(
\begin{array}{cc}
\frac{\alpha^{-1}x_1^2 +\alpha x_2^2}{x_1^2+x_2^2}&\frac{(\alpha^{-1}-\alpha)x_1 x_2}{x_1^2+x_2^2}\\
\\
\frac{(\alpha^{-1}-\alpha)x_1 x_2}{x_1^2+x_2^2}&\frac{\alpha x_1^2 +\alpha^{-1} x_2^2}{x_1^2+x_2^2}
\end{array}
\right)\,,
\end{equation*}
 so that $\sigma$ has eigenvalues $\alpha$ and $\alpha^{-1}$ and it is uniformly elliptic. However
$\sigma$ is discontinuous at $0$, and only at $0$, when $\alpha\neq 1$.
Set
\begin{equation*}
\begin{array}{l}
u^1(x)=|x|^{\alpha -1}x_1\,,\\
u^2(x)=|x|^{\alpha -1}x_2\,.
\end{array}
\end{equation*}
A direct calculation shows that $U=(u^1,u^2)$ is $\sigma$--harmonic and injective.
We compute 
\begin{equation*}
\det DU =\alpha |x|^{2(\alpha-1)}\,.
\end{equation*}
Therefore $\det DU$ vanishes at $0$ when $\alpha>1$,  whereas it diverges when $\alpha\in (0,1)$.

Let us also recall an example, provided by Hartman and Wintner \cite[Theorem ($i^*$)]{HW}, of a coefficient matrix $\sigma$, uniformly elliptic and with \emph{continuous} entries, such that the only weak solutions to $ {\rm div}  (\sigma \nabla u) = 0$ which have continuous first derivatives are the constant ones. As a consequence, for such a $\sigma$, no  homeomorphic $\sigma$--harmonic mapping $U$ can be a diffeomorphism.

The aim of the present note is to show that  a homemomorphic $\sigma$--harmonic mapping $U$ satisfies
\[ {\rm det}DU\neq 0 \text{ everywhere} , \]
%when the coefficient matrix $\sigma$ is H\"older continuous.
when the entries of $\sigma$ are H\"older continuous. We recall that, in this case, the local H\"older continuity of $D U$ is well--known.

Let us now state our main result. In what follows we let $\Omega \subset \mathbb R^2$ be a  simply connected open set, and we denote by  $\sigma=\sigma(x)$  a possibly non--symmetric matrix  having measurable entries and satisfying the ellipticity conditions
\begin{equation}\label{ell}
\begin{array}{ccrllll}
\hbox{$\hskip0,4cm \sigma(x) \xi\cdot \xi \geq  K^{-1} |\xi|^2$, for every $\xi \in \mathbb R^2\ , x \in \Omega$\,,}\\

\hbox{$\sigma^{-1}(x) \xi \cdot \xi \geq  K^{-1} |\xi|^2$, for every $\xi \in \mathbb R^2 \ , x \in \Omega$\,,}

\end{array}
\end{equation}
for a given constant $K\ge 1$.

\begin{theorem}\label{main}
Assume {  \eqref{ell} and } that the entries of $\sigma$ satisfy $\sigma_{ij}\in C^{\alpha}_{loc}(\Omega)$ for some $\alpha \in (0,1)$ and for every $i,j=1,2$.
Let $U=(u^1,u^2) \in W^{1,2}_{loc}(\Omega, \mathbb R^2)$  be such that
\begin{equation}\label{basiceq}
{\rm div} (\sigma \nabla u^i)=0 \ , i=1,2,
\end{equation}
weakly in $\Omega$. If $U$ is  locally a homeomorphism, then it is, locally, a  diffeomorphism,  {   that is}
\begin{equation}\label{det}
{\rm det} DU\neq 0 \hbox{ for every } x \in \Omega\ .
\end{equation}
\end{theorem}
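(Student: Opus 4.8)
The plan is to work locally and argue by contradiction, reducing the vanishing of $\det DU$ at a point to the presence of a critical point of a single scalar solution and then contradicting the injectivity of $U$ by plane topology. Fix $x_0\in\Omega$ and a ball $B\subset\Omega$ about $x_0$ on which $U$ is a homeomorphism onto the open set $U(B)$; after a translation assume $x_0=0$ and $U(0)=0$. Since the entries of $\sigma$ are locally H\"older continuous, interior Schauder estimates together with the De~Giorgi--Nash theorem give $u^1,u^2\in C^{1,\alpha}_{loc}(\Omega)$, so $DU$ is continuous and \eqref{det} is a genuine pointwise statement. Suppose, for contradiction, that $\det DU(0)=0$. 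Then $\nabla u^1(0)$ and $\nabla u^2(0)$ are linearly dependent, so $\nabla w(0)=0$ for $w:=\lambda u^1+\mu u^2$ with a suitable $(\lambda,\mu)\neq(0,0)$. This $w$ solves $\mathrm{div}(\sigma\nabla w)=0$ weakly in $B$, lies in $C^{1,\alpha}_{loc}$, satisfies $w(0)=0$, and is non-constant near $0$: otherwise $U$ would map a neighbourhood of $0$ into the line $\ell:=\{\xi\in\mathbb R^2:\lambda\xi_1+\mu\xi_2=0\}$, contradicting that $U$, being a homeomorphism onto an open set, is an open map.

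The crucial step is the local structure of the zero set of $w$ at the critical point $0$. Since $B$ is simply connected, the field $J\sigma\nabla w$, with $J$ the rotation by $\pi/2$, is curl-free because of \eqref{basiceq}, so there is a stream function $\widetilde w\in C^{1,\alpha}_{loc}$ with $\nabla\widetilde w=J\sigma\nabla w$; by \eqref{ell} the mapping $F:=w+i\widetilde w$ satisfies a Beltrami equation and is therefore $K$-quasiregular, with $F(0)=0$ and $DF(0)=0$. By Stoilow factorization --- equivalently, by the Hartman--Wintner asymptotics \cite{HW} combined with strong unique continuation --- $F$ is, in a neighbourhood of $0$, the composition of a homeomorphism with the map $\zeta\mapsto\zeta^m$ for some integer $m\ge 1$, and $DF(0)=0$ forces $m\ge 2$. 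Consequently $\{w=0\}=\{\operatorname{Re}F=0\}$ consists, in a small punctured neighbourhood of $0$, of exactly $2m\ge 4$ simple arcs issuing from $0$.

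On the other hand $\{w=0\}=U^{-1}(\ell)$. Choosing a round disk $V\subset U(B)$ centred at $0$, the set $\ell\cap V$ is an open segment having $0$ as an interior point, and, $U$ being a homeomorphism, $\{w=0\}\cap U^{-1}(V)=U^{-1}(\ell\cap V)$ is homeomorphic to $\ell\cap V$; in particular, removing $0$ disconnects $\{w=0\}$, near $0$, into exactly two components. Since $2m\ge 4$, this contradicts the preceding paragraph. Hence $\det DU(0)\neq 0$, and since $x_0\in\Omega$ was arbitrary, \eqref{det} holds.

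The main obstacle is precisely the description of $\{w=0\}$ near the critical point $0$ as a union of $2m$ arcs with $m\ge 2$: it relies on the fine behaviour of solutions of planar elliptic equations with H\"older coefficients --- finiteness of the vanishing order, via strong unique continuation, together with the normal form ``$\operatorname{Re}(\zeta^m)$ in quasiconformal coordinates'' --- and one must check that these facts apply in the present divergence-form, possibly non-symmetric, setting (e.g.\ through the stream-function reduction to a quasiregular map sketched above). Once this local structure is granted, the remaining ingredients --- invariance of domain and elementary plane topology --- are routine.
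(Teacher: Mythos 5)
Your overall strategy --- reduce $\det DU(x_0)=0$ to a critical point of a scalar combination $w=\lambda u^1+\mu u^2$, then contradict the injectivity of $U$ by comparing the local branching of $\{w=0\}$ with the two--ended arc $U^{-1}(\ell)$ --- is a legitimate, Lewy--style alternative to the paper's argument (the paper instead shows directly that $U\cdot\xi$ has no critical points, by pulling back a disk centred at $U(z_0)$, observing that the resulting Dirichlet datum is unimodal, and invoking Lemma \ref{lemma2}). The topological half of your proof is fine, as is the reduction to the quasiregular map $F=w+i\widetilde w$ and the Stoilow factorization $F=H\circ\chi$. The genuine gap is that the entire analytic content of the theorem is concentrated in the single clause ``$DF(0)=0$ forces $m\ge 2$'', which you assert, flag as ``the main obstacle'', and then grant. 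It is not a triviality: $\chi$ is only a quasiconformal homeomorphism, and such a map can be differentiable at a point with vanishing derivative. Meyers' example \cite{meyers}, reproduced in the Introduction, does exactly this: for $\alpha>1$ one has $\det DU(0)=0$ while $\{u^1=0\}$ is a straight line through the origin, i.e.\ $m=1$ and only two arcs emanate from the critical point, so no topological contradiction arises. The H\"older continuity of $\sigma$ must therefore enter precisely at this step, and your proposal uses it only to get $C^{1,\alpha}$ regularity of $U$, which is not enough. Moreover, the route you suggest for closing the gap is shaky: the reference \cite{HW} in this bibliography is the Hartman--Wintner counterexample paper (continuous coefficients admitting no non--constant $C^1$ solutions), not their paper on local asymptotics, and the classical Hartman--Wintner asymptotics require rewriting the equation as $\Delta u=O(|u|+|\nabla u|)$, hence some control on derivatives of the coefficients, which H\"older continuity does not provide. (Finiteness of $m$, i.e.\ strong unique continuation, is on the other hand free in two dimensions from the factorization itself, since $H$ is a non--constant holomorphic function.)

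The missing step can be supplied by exactly the device of Lemma \ref{lemma2}. If $m=1$, then $H$ is locally conformal at $\chi(0)$, so $F$ is locally a quasiconformal homeomorphism near $0$; its inverse solves the Beltrami equation \eqref{inverseB}, whose coefficients are H\"older continuous because $\mu,\nu$ are H\"older and $F^{-1}$ is H\"older as a quasiconformal map; Schauder theory then gives $F^{-1}\in C^{1,\beta}_{loc}$, and the chain rule $DF^{-1}(0)\,DF(0)=I$ forces $DF(0)\neq 0$. The contrapositive yields $m\ge 2$ at any critical point, and with this inserted your argument is complete and constitutes a genuinely different proof from the paper's: yours localizes at the putative critical point and uses plane topology of level sets, whereas the paper globalizes over a pulled--back disk and uses Kneser's unimodality argument to exclude critical points outright. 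Note that the analytic kernel --- the $C^{1,\beta}$ regularity of the inverse Beltrami equation --- is the same in both.
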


The main object of investigation here is merely of local character. We should also mention, however,  the relevance of the global issues regarding finding suitable boundary data guaranteeing that a harmonic (or $\sigma$--harmonic) map $U$ is a homeomorphism, or a diffeomeorphism, in the large. Starting with the classical Rad\'{o}--Kneser--Choquet theorem \cite{duren}, let us mention the contributions by Bauman and Phillips \cite{BP}, Bauman, Marini and Nesi \cite{bmn}, and also by the present authors \cite{ANARMA,ANPISA}. For an holomorphic function the results are classical. Let $f$ be holomorphic in the open set $\Omega$, and let $\omega$ be a Jordan domain with boundary $\gamma$ such that $\overline{\omega}=\omega\cup \gamma \subset \Omega$.  When $f$ is one-to-one on $\gamma$, $f(\gamma)$ is a Jordan curve $\Gamma$ which is the boundary of the open set $f(\omega)$. Then $f$ maps the bounded set $\omega\cup \gamma$ onto the set $f(\omega)\cup \Gamma$ in  a one to one way. Note that the condition that $f$ is a one to one map from $\gamma$ to $\Gamma$ is necessary. The classical statement is that the latter condition is also sufficient.
See for instance \cite[Theorem 4.5]{Markushevich}, and the discussion and far--reaching extensions by Meisters and Olech \cite{meistol}.
 Planning to return on such questions in forthcoming research, for the purpose of this note, we restrict  the attention to the purely local issue.

In the next Section \ref{pre} we prepare the proof of Theorem \ref{main} with some preliminary considerations and two Lemmas, { and eventually} we conclude the proof. The final Section \ref{nondiv} contains a variation on the theme of Theorem \ref{main}, in which we treat the case when the equation \eqref{dvrg} is replaced by an equation in non--divergence form with $L^{\infty}$ coefficients. In fact, it is well--known, Bers and Nirenberg \cite{bersni}, Talenti \cite{ta}, that, in two dimensions, $W^{2,2}_{loc}$ solutions of non--divergence elliptic equations have H\"older continuous first derivatives, even if the coefficients in the principal part are discontinuous. Hence it makes sense to enquire if Theorem \ref{main} can be extended to this case. The affirmative answer is contained in Theorem \ref{var}.

\section{{ Preliminary Lemmas and the proof of Theorem \ref{main}}}\label{pre}
%Let us recall some known facts on solutions of elliptic equations in two variables. 

In all what follows we shall assume the ellipticity conditions \eqref{ell} to be satisfied.  Recall that we use the convention to identify points $x=(x_1, x_2)\in \mathbb R^2$ with complex numbers $z=x_1+ix_2$.
Let us recall some known facts on solutions of elliptic equations in two variables. 
Let $u \in W^{1,2}_{loc}(\Omega)$ be a weak, real valued, solution to
\[{\rm div} (\sigma \nabla u)=0 \hbox{ in } \Omega \ , \]
then there exists $v \in W^{1,2}_{loc}(\Omega)$ (called the \emph{stream function} of $u$) such that
\begin{equation}\label{CR}
\nabla v = J \sigma \nabla u \ ,
\end{equation}
where the matrix $J$ represents the counterclockwise $90^{\circ}$ rotation
\begin{equation}\label{J}
 J=\left(
\begin{array}{ccc}
0&-1\\
1&0
\end{array}
\right)\,,
\end{equation}
see, for instance, \cite{AMsiam}. The system \eqref{CR} can be recast as a Beltrami type equation. In fact, if we set \begin{equation}\label{f}
f=u+iv \ , 
\end{equation}
then \eqref{CR} can be rewritten as
\begin{equation}\label{B}
\begin{array}{ll}
f_{\bar{z}}=\mu f_z +\nu \overline{f_z}\ & \hbox{in $\Omega$}\ ,
\end{array}
\end{equation}
where, the so called complex dilatations $\mu , \nu$ are given by
\begin{equation}\label{SNU}
\begin{array}{llll}
\mu=\frac{\sigma_{22}-\sigma_{11}-i(\sigma_{12}+\sigma_{21})}{1+{\rm
Tr\,}\sigma +\det \sigma}& \ ,&\nu =\frac{1-\det \sigma
+i(\sigma_{12}-\sigma_{21})}{1+{\rm Tr\,}\sigma +\det \sigma}\ ,
\end{array}
\end{equation}
and satisfy the following
ellipticity condition
\begin{equation}\label{ellQC}
|\mu|+|\nu|\leq k< 1 \,,
\end{equation}
where the constant $k$ only depends on $K$, see \cite[Proposition 1.8]{ANFINNICO} and, for any $2\times 2$ matrix $A$, the trace of $A$ is denoted by ${\rm Tr\,} A$.

It is a classical well--known fact, Bers and Nirenberg \cite{bersni}, Bojarski \cite{BO}, that a $W^{1,2}_{loc}$ solution to \eqref{B} fulfills the so--called Stoilow representation
\begin{equation}\label{St}
f= F\circ \chi \ ,
\end{equation}
where $F$ is holomorphic and $\chi$ is a quasiconformal homeomorphism. As an immediate consequence, $u$ can be represented as
\[ u= h \circ \chi \ , \]
where $h$ is harmonic. Thus, up to a quasiconformal mapping, the structure of the level lines of $u$ is the same as the one of a harmonic function. In this respect, in \cite{AMsiam} the concept of \emph{geometrical critical point} was introduced as follows: $z_0\in \Omega$ is a geometrical critical point for $u$ if and only if $\chi(z_0)$ is a critical point for $h$. In \cite[Theorems 2.7, 2.8]{AMsiam} it was also introduced a calculus of geometric critical points in terms of the oscillatory character of prescribed (Dirichlet or Neumann) boundary data for $u$. We shall apply such a calculus in a very specific case, to this purpose we recall a terminology first introduced in \cite{ln}.

\begin{definition}\label{umod}
Let $G$ be a Jordan domain bounded by the Jordan curve $\Gamma$. A non--constant continuous function $g$ on $\Gamma$ is said to be \emph{unimodal} if $\Gamma$ can be split into two simple arcs $\Gamma_1, \Gamma_2$, which inherit the orientation of $\Gamma$, such that $g$ is nondecreasing on $\Gamma_1$ and nonincreasing on $\Gamma_2$.
\end{definition}
The argument to prove the following Lemma can be traced back to Kneser \cite{k}, in his proof of the celebrated Rad\`o--Kneser--Choquet theorem, see for instance Duren \cite{duren}.
\begin{lemma}\label{nogcp}
Let $G$ be as in Definition \ref{umod}, given $g$ continuous on $\partial G = \Gamma$, consider the weak solution $u\in W^{1,2}_{loc}(G)\cap C(\overline{G})$ to the Dirichlet problem
\begin{equation}\label{dirich}
\left\{
\begin{array}{lll}
{\rm div} (\sigma \nabla u)=0&{\rm in}& G\,,\\
u=g&{\rm on}&\partial G\,.
\end{array}
\right.
\end{equation}
If $g$ is unimodal, then $u$ has no geometrical critical points and the mapping $f$ given by \eqref{f} is a quasiconformal homeomorphism.
\end{lemma}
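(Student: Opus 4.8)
The plan is to exploit the Stoilow representation $f = F\circ\chi$ to transfer the problem to the harmonic case, where the classical Kneser/Rad\'o--Kneser--Choquet argument applies. Write $u = h\circ\chi$ with $h$ harmonic on the Jordan domain $G' = \chi(G)$ and $\chi: \overline G \to \overline{G'}$ a quasiconformal homeomorphism extending to the boundary. Since $\chi$ is an orientation-preserving homeomorphism of $\overline G$, it carries the splitting $\Gamma = \Gamma_1\cup\Gamma_2$ of Definition \ref{umod} to a splitting $\Gamma' = \Gamma_1'\cup\Gamma_2'$ of $\Gamma' = \partial G'$ into two simple arcs inheriting the orientation; and since $h|_{\Gamma'} = g\circ\chi^{-1}$, the boundary trace of $h$ is again unimodal in the sense of Definition \ref{umod}. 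So it suffices to prove the statement for $\sigma = I$, i.e. to show a harmonic function on a Jordan domain with unimodal boundary data has no critical points and that $h + i\tilde h$ (with $\tilde h$ the harmonic conjugate) is a homeomorphism; the geometrical-critical-point claim for $u$ and the quasiconformality/homeomorphism claim for $f$ then follow, because $f = F\circ\chi$ with $F = h + i\tilde h$ and a composition of a conformal homeomorphism with a quasiconformal homeomorphism is quasiconformal.

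For the harmonic case I would argue as in Kneser's proof of Rad\'o--Kneser--Choquet. Suppose $h$ has a critical point $w_0 \in G'$, so $\nabla h(w_0) = 0$. By the classical structure of critical points of harmonic functions (the level set of $h$ through $w_0$ consists, near $w_0$, of $m\ge 2$ analytic arcs crossing at equal angles), the level set $\{h = h(w_0)\}$ near $w_0$ separates a full punctured neighborhood of $w_0$ into at least four ``sectors'' on which $h - h(w_0)$ alternates in sign. Following the level curve of $h$ through $w_0$ until it first reaches $\Gamma'$ — which it must, by the maximum principle, since $h$ is nonconstant — one produces a chord of $G'$ on which $h$ is constant, with $h - h(w_0)$ taking \emph{both} signs on \emph{each} of the two subdomains cut off by this chord. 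But unimodality of $g\circ\chi^{-1}$ forces each level set $\{h = c\}\cap\Gamma'$ to consist of at most two points (one in the interior of $\Gamma_1'$, one in $\Gamma_2'$, where $h$ is monotone), hence each boundary arc cut off by the chord is one on which $h-c$ has constant sign by monotonicity — contradiction. This shows $\nabla h \ne 0$ throughout $G'$, i.e. $u$ has no geometrical critical points.

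It remains to upgrade ``no critical points'' to ``$F = h + i\tilde h$ is injective,'' equivalently ``$f$ is a homeomorphism onto its image.'' Since $\nabla h\ne 0$, $F$ is locally injective and $\det DF = |\nabla h|^2 > 0$, so $F$ is an open map; injectivity will follow from an argument principle / degree computation: for a generic value $w$, the number of preimages of $w$ under $F$ in $G'$ equals the winding number of $F|_{\Gamma'}$ about $w$, and unimodality of the two boundary components makes $F(\Gamma')$ a curve of winding number at most one about any point, hence $F$ is injective (one may also invoke the monotonicity-of-boundary-data criterion for harmonic maps directly, as in Duren \cite{duren}). Pulling back through $\chi$, $f$ is a homeomorphism of $G$ onto $F(G')$, and quasiconformal since it is the composition of the quasiconformal $\chi$ with the conformal-away-from-critical-points, but here critical-point-free, hence conformal, map $F$.

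I expect the main obstacle to be the rigorous bookkeeping in the harmonic case: making precise that a level curve of $h$ emanating from an interior critical point reaches the boundary as an honest Jordan arc, that it cuts $G'$ into exactly two Jordan subdomains, and that the sign pattern of $h - c$ on the four sectors at $w_0$ is incompatible with the (at most) two-point structure of $\{h = c\}\cap\Gamma'$ coming from unimodality. This is exactly the technical heart of Kneser's argument, and the one new wrinkle is checking that $\chi$ transports the unimodality hypothesis faithfully — which is immediate once one notes $\chi$ is an orientation-preserving homeomorphism of closed Jordan domains.
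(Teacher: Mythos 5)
Your proposal follows essentially the same route as the paper: reduce to the harmonic case via the Stoilow factorization $f=F\circ\chi$, transport the unimodality of the boundary data through the quasiconformal homeomorphism $\chi$, rule out critical points of $h$ by Kneser's level-set argument, and then deduce that $F=h+i\tilde h$ is injective so that $f=F\circ\chi$ is a quasiconformal homeomorphism. The only (inessential) divergence is in the final injectivity step, where the paper observes that unimodality forces the level lines of $h$ to be simple arcs along which the harmonic conjugate is strictly monotone by the Cauchy--Riemann equations, whereas you use a winding-number/argument-principle count; both are routine once $\nabla h\neq 0$ is in hand.
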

{  
\begin{remark}\label{regbdry} It is a classical matter that a unique solution to \eqref{dirich} exists, indeed we may recall the  theory by Littman, Stampacchia and Weinberger \cite{LSW} and the fact that the boundary  points of a Jordan domain are regular for the classical Dirichlet problem, see for instance \cite[Ch. XII]{kell}.  However, we emphasize that this is not the central issue here, since this Lemma will be applied to restrictions of solutions on larger domains, which will be automatically continuous up to the boundary.
\end{remark}
}
\begin{proof}
The absence of geometrical critical points was proven in \cite[Theorem 2.7]{AMsiam}, { to which we refer for details. We should note that  in \cite{AMsiam}  $\sigma$ is assumed symmetric, but the proof applies with no changes also in the non--symmetric case. In fact, up to the change of coordinates $\chi$, the whole matter reduces to analyze a harmonic function $h$ whose Dirichlet data is unimodal (Kneser \cite{k} first proved that under such assumptions $\nabla h$ never vanishes).}

Then the representation formula \eqref{St} can be rewritten  
\begin{equation}\label{St2}
u= h\circ \chi \ , \, v= k\circ \chi \ ,
\end{equation}
with $\nabla h \neq 0$ and $k$ being a harmonic conjugate to $h$. Moreover, the unimodality of $g$ also implies that the level lines of $h$ are simple arcs and {  then, using the Cauchy--Riemann equations, we deduce that} $k$ is strictly monotone along them. Hence $F=h+ik$ is an injective holomorphic map and consequently $f=F\circ \chi$ is a quasiconformal homeomorphism.
\end{proof}
A variant of the previous Lemma can be formulated as follows.
\begin{lemma}\label{lemma2}
In addition to the hypotheses of Lemma \ref{nogcp}, let us assume $\sigma_{ij}\in C^{\alpha}_{loc}(G)$ for some $\alpha \in (0,1)$ and for every $i,j=1,2$. Then we have
\[ |\nabla u | >0 \hbox{ everywhere in } G\ . \]
\end{lemma}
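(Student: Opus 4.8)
The plan is to upgrade the conclusion of Lemma \ref{nogcp} from ``no geometrical critical points'' to ``no critical points at all'' by exploiting the extra H\"older regularity. Recall that Lemma \ref{nogcp} already gives us the Stoilow factorization \eqref{St2}, namely $u = h\circ\chi$ with $h$ harmonic, $\nabla h\neq 0$ everywhere, and $\chi$ a quasiconformal homeomorphism. Since a quasiconformal homeomorphism is differentiable a.e.\ with $\det D\chi>0$ a.e., and since $\nabla h$ never vanishes, the chain rule already yields $\nabla u\neq 0$ a.e.\ in $G$. The issue is the exceptional null set where $\chi$ may not be differentiable, or where $D\chi$ could in principle degenerate; we must rule out that $\nabla u$ vanishes at any such point.

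\medskip

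First I would invoke the H\"older regularity theory: since $\sigma_{ij}\in C^\alpha_{loc}(G)$, the classical Schauder-type estimates for divergence-form equations (Morrey, or Ladyzhenskaya--Ural'tseva) give $u\in C^{1,\alpha}_{loc}(G)$, and likewise the stream function $v$ satisfies a divergence-form equation with $C^\alpha$ coefficients (namely $\mathrm{div}(\sigma^{-T}J^T\nabla v)=0$, or more simply one reads it off from \eqref{CR}), so $v\in C^{1,\alpha}_{loc}(G)$ as well. Hence $f=u+iv$ is a $C^{1,\alpha}_{loc}$ quasiconformal homeomorphism. Now I would use a Hartman--Wintner / Bers--Nirenberg type result on the structure of the zero set of $\nabla u$ for such regular solutions: at a point $z_0$ where $\nabla u(z_0)=0$, the solution $u$ admits an asymptotic development $u(z)-u(z_0) = \mathrm{Re}\,\big(a(z-z_0)^n\big) + o(|z-z_0|^n)$ for some integer $n\geq 2$ and $a\neq 0$, so that $z_0$ is a genuine saddle-type critical point and at least $2n\geq 4$ level arcs of $\{u=u(z_0)\}$ emanate from $z_0$. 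Equivalently, and this is the cleanest route, a critical point of $u$ in the $C^{1,\alpha}$ setting \emph{is} a geometrical critical point in the sense of \cite{AMsiam}, because the asymptotic development transfers through the quasiconformal factor $\chi$ to an honest interior critical point of the harmonic function $h$ (matching multiplicity $n$).

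\medskip

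The key step is therefore this identification: \textbf{under the $C^\alpha$ hypothesis on $\sigma$, the notion of ``critical point of $u$'' and ``geometrical critical point of $u$'' coincide.} Granting that, the proof finishes in one line: by Lemma \ref{nogcp} the unimodality of $g$ forces $u$ to have no geometrical critical point in $G$, hence no critical point, i.e.\ $|\nabla u|>0$ everywhere in $G$. To establish the identification I would argue as follows. The factor $\chi$ is a $C^{1,\alpha}$ quasiconformal \emph{homeomorphism}, so $\det D\chi$ does not vanish: indeed $\det D\chi = |(\chi)_z|^2 - |(\chi)_{\bar z}|^2 = |(\chi)_z|^2(1-|\mu_\chi|^2) \geq (1-k^2)|(\chi)_z|^2$, and $(\chi)_z$ cannot vanish at an interior point of a quasiconformal map that is $C^1$ there — were it to vanish, $\chi$ would have a higher-order zero of its complex derivative and could not be locally injective (this is exactly the holomorphic-type dichotomy recalled in the Introduction, applied to $\chi$ which satisfies a Beltrami equation with $C^\alpha$ and hence $C^{1,\alpha}$ solutions). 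Consequently $D\chi$ is an invertible linear map at every point of $G$, the chain rule $\nabla u(z) = D\chi(z)^T\,\nabla h(\chi(z))$ holds everywhere, and $\nabla u(z)=0 \iff \nabla h(\chi(z))=0 \iff z$ is a geometrical critical point.

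\medskip

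The main obstacle I anticipate is exactly the non-vanishing of $(\chi)_z$ at every interior point, i.e.\ that the quasiconformal change of variables is a genuine $C^1$ diffeomorphism once the coefficients are H\"older — one must not merely quote ``$\chi$ is a homeomorphism'' but genuinely use the regularity of the Beltrami coefficient. A clean way to see it: since $\mu_\chi\in C^\alpha_{loc}$, write $\chi_z = e^{\omega}$ with $\omega$ H\"older (the ``representation of the derivative'' of a quasiconformal map with H\"older Beltrami coefficient, via the Vekua/Bojarski theory), whence $\chi_z$ never vanishes. Alternatively one invokes the already-cited fact that $DU$, hence $D\chi$, is locally H\"older, combined with the Stoilow representation, to conclude that the critical set of $u$ maps exactly onto the critical set of the harmonic $h$. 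Once this correspondence is in place, Lemma \ref{nogcp} closes the argument immediately.
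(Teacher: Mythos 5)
There is a genuine gap in your key step, and it sits exactly where you anticipated trouble. Your argument that $\chi_z$ cannot vanish rests on two claims, neither of which holds as stated. First, you assert that a quasiconformal homeomorphism which is $C^1$ at an interior point cannot have vanishing differential there, ``by the holomorphic dichotomy''. This is false: Meyers' example recalled in the Introduction, $U(x)=|x|^{\alpha-1}x$ with $\alpha>1$, is an injective quasiconformal map of class $C^1$ near the origin with $DU(0)=0$; the local power-series dichotomy for holomorphic functions does not transfer to $C^1$ quasiconformal maps. Second, your fallback --- that $\mu_\chi\in C^{\alpha}_{loc}$, so that $\chi_z=e^{\omega}\neq 0$ by the Vekua--Bojarski representation --- is circular. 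In the Stoilow factorization $f=F\circ\chi$ for the $\mathbb R$-linear equation \eqref{B}, the factor $\chi$ solves $\chi_{\bar z}=\tilde\mu\,\chi_z$ with $\tilde\mu=\mu+\nu\,\overline{f_z}/f_z$; the unimodular factor $\overline{f_z}/f_z$ is not known to be continuous precisely at the points where $f_z$ might vanish, which is what you are trying to exclude. Your alternative route via a Hartman--Wintner asymptotic development $u-u(z_0)=\mathrm{Re}\bigl(a(z-z_0)^n\bigr)+o(|z-z_0|^n)$ would indeed close the argument (a saddle of order $n\ge 2$ is a geometrical critical point), but for divergence-form equations with merely H\"older coefficients this expansion is itself a nontrivial theorem that you assert without proof or precise reference; it cannot be obtained by differentiating \eqref{B}, since $\mu,\nu$ are not differentiable.

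The paper's proof avoids all of this with a short inversion trick worth internalizing: since Lemma \ref{nogcp} already makes $f$ a quasiconformal homeomorphism, one passes to $g=f^{-1}$, which satisfies the Beltrami equation $g_{\overline w}=-\nu(g)g_w-\mu(g)\overline{g_w}$, whose coefficients \emph{are} H\"older continuous (compositions of the H\"older functions $\mu,\nu$ with the H\"older homeomorphism $g$, with no phase factor involved). Schauder theory then gives $g\in C^{1,\beta}_{loc}$, hence the \emph{upper} bound $|g_w|^2-|g_{\overline w}|^2\le C^2$ on compact sets, which inverts to the \emph{lower} bound $|f_z|^2-|f_{\bar z}|^2\ge C^{-2}$, i.e.\ $|\nabla u|\ge C^{-1}>0$. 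The point is that an upper gradient bound --- which regularity theory gives for free --- becomes a lower bound after inversion, so one never needs to analyze the critical set of $u$ or the differentiability of the Stoilow factor at all.
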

\begin{proof}
In view of \eqref{SNU}  and \eqref{ell}, the coefficients $\mu, \nu$ in equation \eqref{B}, turn out to be $C^{\alpha}_{loc}(G,\mathbb C)$. { In view of Lemma \ref{nogcp} we may introduce} $g = f^{-1}$, the Beltrami equation for $g$ can be computed to be
\begin{equation}\label{inverseB}
g_{\overline{w}}=-\nu(g) g_{w}- \mu(g)\overline{g_{w}}\,,
\end{equation}
see for instance \cite{ANves}. Also in this equation, the coefficients belong to $C^{\beta}_{loc}$ for some $\beta \in (0,1)$. Classical interior regularity theory tells us that $g \in C^{1,\beta}_{loc}$. Hence, for any compact subset $Q\subset G$  there exists $C>0$ such that
\begin{equation}\label{upperbound}
\begin{array}{lll}
|g_w|^2-|g_{\overline{w}}|^2\leq C^2& \hbox{in}&f(Q)\,,
\end{array}
\end{equation}
which can be rewritten as 
\begin{equation*}
\begin{array}{lll}
|f_z|^2-|f_{\overline{z}}|^2\geq C^{-2}& \hbox{in}&Q\,,
\end{array}
\end{equation*}
which in turn implies
\begin{equation}
\begin{array}{lll}
|\nabla u|\geq C^{-1}>0& \hbox{in}&Q\,.
\end{array}
\end{equation}
\end{proof}

{  We  are now in position to complete the proof of our main result.}
%\section{Proof of the main Theorem}\label{final}
\begin{proof}[Proof of Theorem \ref{main}]
 Up to replacing $\Omega$ with a smaller open subset, there is no loss of generality in assuming that $U$ is one--to--one in all of  $\Omega$. It suffices to prove that for all $\xi \in \mathbb R^2$, $|\xi|=1$, the function $u = U\cdot \xi$ satisfies 
\[ |\nabla u | >0 \hbox{ everywhere in } \Omega . \]
 By linearity, $u$ solves
\[{\rm div} (\sigma \nabla u)=0 \]
in $\Omega$.

Let us fix $z_0\in\Omega$ and set $ w_0=U(z_0)$. Let $r>0$ be such that $\overline{B_r(w_0)}\subset U(\Omega)$. Let $G=U^{-1}(B_r(w_0) )$. $G$ is a Jordan domain and $U(\partial G)= \partial B_r(w_0)$ is a circle, hence the boundary of a convex domain. As a consequence, $g = u|_{\partial G}= U\cdot \xi|_{\partial G}$ is unimodal. By Lemma \ref{lemma2}, the thesis follows.
\end{proof}
\begin{remark}
It may be curious to notice that, in the above proof, use is made, on a local basis, of an argument based on the convexity of a domain in the target coordinates, which is crucial in the already mentioned Rad\'{o}--Kneser--Choquet theorem and its known variants.
\end{remark}
\section{The non--divergence case}\label{nondiv}
\begin{theorem}\label{var}
Let $U=(u^1,u^2) \in W^{2,2}_{loc}(\Omega, \mathbb R^2)$  be such that
\begin{equation}\label{nondiveq}
{\rm Tr} (\sigma D^2 u^i)+ b\cdot \nabla u^i=0 \ , i=1,2,
\end{equation}
almost everywhere in $\Omega$, where $\sigma$  fulfills \eqref{ell} and $b\in L^{\infty}(\Omega, \mathbb R^2)$. 

If $U$ is  locally a homeomorphism, then it is,  locally,  a diffeomorphism and
\begin{equation}\label{det>}
{\rm det} DU\neq 0 \hbox{ for every } x \in \Omega\ .
\end{equation}
\end{theorem}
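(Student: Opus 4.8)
The plan is to reduce Theorem \ref{var} to the divergence-form situation already handled in Theorem \ref{main}, by exploiting the classical fact (Bers--Nirenberg \cite{bersni}, Talenti \cite{ta}) that in two dimensions a second order non--divergence elliptic equation can be put into divergence form after multiplication by a suitable positive scalar factor. Concretely, for a fixed $\xi\in\mathbb R^2$ with $|\xi|=1$ set $u=U\cdot\xi$; by linearity $u\in W^{2,2}_{loc}(\Omega)$ solves ${\rm Tr}(\sigma D^2u)+b\cdot\nabla u=0$ a.e. The first step is to invoke the existence of a multiplier: there is a function $\rho\in C(\Omega)$ with $0<\lambda\le\rho\le\Lambda$ such that $\rho\big({\rm Tr}(\sigma D^2u)+b\cdot\nabla u\big)={\rm div}(\tilde\sigma\nabla u)$ for an elliptic matrix $\tilde\sigma$. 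Since here we do not have H\"older continuity of $\sigma$ assumed, one must be slightly careful: this is exactly the regime where Talenti's and Bers--Nirenberg's results guarantee that $W^{2,2}_{loc}$ solutions nonetheless have locally H\"older continuous gradients, so it is enough to obtain $\tilde\sigma$ with merely bounded measurable entries and then argue directly with the quasiconformal/Stoilow machinery of Section \ref{pre} rather than quoting Theorem \ref{main} verbatim.

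Thus the second step is to run the argument of Lemmas \ref{nogcp} and \ref{lemma2} in this setting. Because $u$ is a weak solution of a divergence-form equation ${\rm div}(\tilde\sigma\nabla u)=0$ with $\tilde\sigma$ satisfying \eqref{ell} (with a possibly larger constant depending on $K,\lambda,\Lambda$), $u$ admits a stream function $v$ and $f=u+iv$ solves a Beltrami equation \eqref{B} with coefficients $\tilde\mu,\tilde\nu$ obeying \eqref{ellQC}; hence the Stoilow representation $f=F\circ\chi$ holds, $u=h\circ\chi$ with $h$ harmonic, and the notion of geometrical critical point is available. Now I would repeat the localization in the proof of Theorem \ref{main}: fix $z_0$, put $w_0=U(z_0)$, choose $r$ with $\overline{B_r(w_0)}\subset U(\Omega)$, and set $G=U^{-1}(B_r(w_0))$, a Jordan domain on whose boundary $u|_{\partial G}=U\cdot\xi|_{\partial G}$ is unimodal because $U(\partial G)$ is a circle. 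Lemma \ref{nogcp} (whose proof only uses the Beltrami/Stoilow structure and Kneser's argument, not the specific origin of $\tilde\sigma$) gives that $f$ is a quasiconformal homeomorphism and $u$ has no geometrical critical points, i.e. $\nabla h\neq0$. The final step is the analogue of Lemma \ref{lemma2}: the inverse $g=f^{-1}$ solves \eqref{inverseB}, and here one uses the Bers--Nirenberg/Talenti regularity to conclude $g\in C^{1,\beta}_{loc}$ for some $\beta\in(0,1)$, whence $|g_w|^2-|g_{\bar w}|^2\le C^2$ locally, $|f_z|^2-|f_{\bar z}|^2\ge C^{-2}$, and therefore $|\nabla u|\ge C^{-1}>0$. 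Since $\xi$ was arbitrary, $\det DU\neq0$ everywhere in $\Omega$.

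The main obstacle, and the step that needs the most care, is the passage to divergence form together with the regularity bootstrap. Unlike in Theorem \ref{main}, no H\"older continuity of the coefficients is assumed, so one cannot simply appeal to Schauder-type interior estimates for $u$; instead the argument must be routed through the inverse map $g$, where the relevant estimates are the $W^{2,p}$ / $C^{1,\beta}$ results of Talenti and Bers--Nirenberg for planar non-divergence (equivalently, reduced Beltrami) equations with merely bounded measurable principal coefficients. One must check that these apply to \eqref{inverseB} with coefficients $-\tilde\nu(g),-\tilde\mu(g)$ that are only bounded and measurable, which is precisely the content of those classical two-dimensional results, and that the lower-order term $b\cdot\nabla u$ (which becomes a lower-order term in the Beltrami equation for $f$, hence for $g$) is harmless since $b\in L^\infty$. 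Once this regularity input is in place, the rest of the proof is a verbatim transcription of the arguments in Section \ref{pre}.
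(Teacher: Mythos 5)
Your localization (the ball $B_r(w_0)$, the Jordan domain $G=U^{-1}(B_r(w_0))$, the unimodality of $U\cdot\xi$ on $\partial G$) is exactly the paper's, but the analytic core of your argument has two genuine gaps. First, the ``classical fact'' you start from is not available at this level of regularity: to have $\rho\bigl({\rm Tr}(\sigma D^2u)+b\cdot\nabla u\bigr)={\rm div}(\tilde\sigma\nabla u)$ for all $u\in W^{2,2}_{loc}$ you must match both the second--order terms (forcing the symmetric part of $\tilde\sigma$ to be $\rho$ times that of $\sigma$) and the first--order terms (forcing ${\rm div}\,\tilde\sigma=\rho b$ distributionally), so $\rho$ must solve an adjoint--type equation $\partial_i\partial_j(\rho\sigma_{ij})-{\rm div}(\rho b)=0$. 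For merely bounded measurable $\sigma$ such adjoint solutions need not be continuous, let alone pinched between positive constants $\lambda$ and $\Lambda$; no multiplier with the properties you assert is known to exist, and nothing in \cite{bersni} or \cite{ta} provides one. Second, even if you had a divergence--form rewrite with only $L^\infty$ coefficients, your endgame does not close: the step $g=f^{-1}\in C^{1,\beta}_{loc}$ in Lemma \ref{lemma2} is precisely where the H\"older continuity of the coefficients of \eqref{inverseB} is used (Schauder theory for first--order Beltrami systems). The Bers--Nirenberg/Talenti results you invoke say that the \emph{complex derivative} $u_z$ of a $W^{2,2}$ solution of a second--order non--divergence equation is H\"older continuous; they do not give $C^{1,\beta}$ regularity for solutions of a first--order Beltrami equation with measurable coefficients -- such solutions are quasiregular, hence only H\"older -- and the Meyers example in the Introduction shows that without continuity of the coefficients ``no geometrical critical points'' genuinely fails to upgrade to ``$\nabla u\neq 0$ everywhere''.

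The paper's route avoids both problems by applying the Beltrami reduction not to $u+iv$ but directly to $f=\partial_z u$: for a $W^{2,2}_{loc}$ solution of \eqref{nondiveq}, $f$ satisfies a Beltrami--type equation with $L^\infty$ coefficients (the drift $b\cdot\nabla u$ contributing only zero--order terms handled by the similarity principle), so the zeros of $f$ -- which are exactly the critical points of $u$ -- are isolated with positive index unless $u$ is constant, and the index count of \cite[Theorem 4.1]{APisa} in terms of the oscillation of the Dirichlet data shows that unimodal data forces $\nabla u\neq 0$ throughout $G$ (this is Lemma \ref{nocp3}). With that lemma in place of Lemma \ref{lemma2}, your localization argument then concludes as in Theorem \ref{main}. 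As written, however, your proposal does not prove the theorem.
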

 The following Lemma, in the style of Lemma \ref{lemma2}, shall be needed.
\begin{lemma}\label{nocp3}
Let $G$ and $g$ be as in Lemma \ref{nogcp}, assume that there exists  $u\in W^{2,2}_{loc}(G)\cap C(\overline{G})$  which solves the Dirichlet problem
\begin{equation}\label{dirichnondiv}
\left\{
\begin{array}{lll}
{\rm tr} (\sigma D^2 u)+ b\cdot \nabla u=0 &{\rm in}& G\,,\\
u=g&{\rm on}&\partial G\,.
\end{array}
\right.
\end{equation}
If $g$ is unimodal then $u$ has no critical points.\end{lemma}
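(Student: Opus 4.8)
The plan is to argue by contradiction: assuming that some $z_0\in G$ is a critical point of $u$ and setting $c:=u(z_0)$, I would contradict the unimodality of $g$ by Kneser's classical argument. The first ingredient is the local analytic structure of $u$. By Bers--Nirenberg \cite{bersni} and Talenti \cite{ta}, the $W^{2,2}_{loc}$ solution $u$ of \eqref{dirichnondiv} belongs to $C^{1,\beta}_{loc}(G)$ for some $\beta\in(0,1)$, so the complex gradient $\phi:=u_{x_1}-iu_{x_2}=2u_z$ lies in $C^{\beta}_{loc}(G,\mathbb C)\cap W^{1,2}_{loc}(G,\mathbb C)$. Since $D^2u$ is symmetric, only the symmetric part of $\sigma$ enters \eqref{dirichnondiv} and we may assume $\sigma$ symmetric; expressing \eqref{dirichnondiv} through $u_{zz}$ and $u_{z\bar z}$, a direct computation shows that $\phi$ solves a Beltrami--type equation with lower order terms,
\begin{equation*}
\phi_{\bar z}=\mu\,\phi_z+\nu\,\overline{\phi_z}+\lambda\,\phi+\eta\,\overline{\phi}\qquad\hbox{in }G ,
\end{equation*}
where $\lambda,\eta\in L^\infty(G,\mathbb C)$ are generated by the drift $b$, and $|\mu|+|\nu|\le k<1$ with $k$ depending only on $K$ --- this last bound being exactly the ellipticity \eqref{ell} written for the symmetric part of $\sigma$.

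Consequently $\phi$ is a generalized analytic function in the sense of Bers and Vekua: removing the principal part by a quasiconformal change of variables and applying the similarity principle (see Bojarski \cite{BO}, Bers--Nirenberg \cite{bersni}) one gets, locally, $\phi=e^{s}(\Phi\circ\chi)$ with $s$ continuous, $\Phi$ holomorphic and $\chi$ quasiconformal. Since $g$ is non--constant, so is $u$, hence $\phi\not\equiv0$, and therefore the critical points of $u$ are isolated, each of them being a zero of $\phi$ of some finite positive order $m$, equivalently a point at which the vector field $\nabla u=\overline{\phi}$ has index $-m\le-1$. In particular, near $z_0$ the level set $\{u=c\}$ consists of an even number, at least four, of arcs emanating from $z_0$, dividing a small punctured disc about $z_0$ into the same number of sectors on which $u-c$ alternates in sign; equivalently, on every sufficiently small circle $S$ centred at $z_0$ each of the open sets $\{u>c\}$, $\{u<c\}$ meets $S$ in at least two arcs, alternating along $S$. (This is the non--divergence counterpart of the local picture of critical points used by Kneser in the harmonic case; compare Hartman--Wintner \cite{HW} and \cite{AMsiam}.)

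Now I would run Kneser's argument. Since \eqref{dirichnondiv} is uniformly elliptic in non--divergence form with $L^\infty$ coefficients, the weak and strong maximum principles hold; hence $\min_\Gamma g<c<\max_\Gamma g$, and by Definition \ref{umod} the super--level set $A_+:=\{x\in\Gamma:g(x)>c\}$ is a single, non--empty, proper sub--arc of $\Gamma$, while $A_-:=\{x\in\Gamma:g(x)<c\}$ is a single sub--arc disjoint from $A_+$. If $V$ is a connected component of $\{u>c\}$ with $z_0\in\overline V$ and $\overline V\cap A_+=\emptyset$, then $u=c$ on $\partial V\cap G$ and $u=g=c$ on $\partial V\cap\Gamma$, so $u\equiv c$ on $\partial V$ and, by the weak maximum principle, $u\equiv c$ in $V$ --- impossible; hence $\overline V\cap A_+\ne\emptyset$, and symmetrically $\overline W\cap A_-\ne\emptyset$ for every component $W$ of $\{u<c\}$ with $z_0\in\overline W$. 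By the alternating sector structure one may select, in cyclic order around $z_0$, sectors belonging to components $V_1,W_1,V_2,W_2$ of $\{u>c\},\{u<c\},\{u>c\},\{u<c\}$ respectively; picking $p_i\in\overline{V_i}\cap A_+$ and joining $p_1$ to $z_0$ within $\overline{V_1}$ and $z_0$ to $p_2$ within $\overline{V_2}$ yields a simple arc $\ell\subset\overline{\{u\ge c\}}$ with endpoints $p_1,p_2\in\Gamma$ through $z_0$; it splits the Jordan domain $G$ into two Jordan subdomains $G',G''$ whose boundary traces on $\Gamma$ are the two sub--arcs determined by $\{p_1,p_2\}$. The sectors of $W_1$ and $W_2$ lie on opposite sides of $\ell$, say $W_1\subset G'$ and $W_2\subset G''$; but $A_-$ is a single sub--arc of $\Gamma$ missing $p_1,p_2$, hence contained in the boundary trace of exactly one of $G',G''$, so $\overline{W_1}\cap A_-=\emptyset$ or $\overline{W_2}\cap A_-=\emptyset$, contradicting the previous step. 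Thus $\nabla u$ never vanishes in $G$.

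The genuine difficulty, I expect, is the planar--topology bookkeeping just invoked: that $\ell$ can be chosen simple (degenerate configurations, e.g. $V_1=V_2$, being handled by replacing $\ell$ with a Jordan loop and using the simple connectedness of $G$), that $\ell$ separates $G$ into two Jordan subdomains with boundary traces split by $\{p_1,p_2\}$, and that $W_1,W_2$ fall on opposite sides of $\ell$. This is exactly Kneser's proof of the Rad\'{o}--Kneser--Choquet theorem \cite{k,duren}, realized for the divergence equation in \cite{AMsiam}; the only changes here are the use of the maximum principle for \eqref{dirichnondiv} in place of harmonicity, and of the structure of the zeros of the generalized analytic function $\phi$ (via the similarity principle) in place of the classical description of the critical points of a harmonic function. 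No boundary regularity of $\nabla u$ is needed, since the argument uses only $u\in C(\overline G)$, the maximum principle, and the interior behaviour of critical points.
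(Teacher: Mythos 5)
Your proposal is correct and follows essentially the same route as the paper: the paper's proof reduces \eqref{dirichnondiv} to a Beltrami--type equation for the complex derivative $\partial_z u$ and then simply cites \cite[Theorem 4.1]{APisa} for the fact that unimodal data exclude critical points. What you have written is, in effect, a reconstruction of the proof of that cited theorem (similarity principle for the generalized analytic function $2u_z$, local sector structure at a zero, maximum principle, and Kneser's topological argument), so it supplies more detail than the paper but no genuinely different idea.
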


{  
\begin{remark} Notice that the existence of a solution to \eqref{dirichnondiv} is taken as an assumption. In fact, as already noted in Remark \ref{regbdry}, this Lemma (similarly to Lemma \ref{nogcp}) will be applied to restrictions of solutions on larger domains.
\end{remark}
}

\begin{proof}
Also for equations in non--divergence form a reduction to a Beltrami type equation is possible, \cite{bersni}, in this context the ad--hoc unknown is the complex derivative $f=\partial_z u$.   We omit the well--known calculation. It suffices to say that also in this case a calculus on the number of critical points in terms of the oscillation character of the Dirichlet data has been developed, \cite[Theorem 4.1]{APisa}, in particular, if $g$ is unimodal then $\nabla u$ never vanishes.
\end{proof}

\begin{proof}[Proof of Theorem \ref{var}]
We may follow the line of the proof of Theorem \ref{main}, just by invoking Lemma \ref{nocp3} in place of Lemma \ref{lemma2}.
\end{proof}
\begin{remark}
As a consequence of Theorem \ref{var}, we observe that a further variant of Theorem \ref{main} could be obtained if the H\"older continuity assumption on the entries of $\sigma$ was replaced by the assumption that, in the weak sense,
\[ {\rm div} \sigma = (\partial_{x_1}{\sigma_{11}}+\partial_{x_2}{\sigma_{21}},\partial_{x_1}{\sigma_{12}}+\partial_{x_2}{\sigma_{22}}) \in L^{\infty}(\Omega, \mathbb R^2)\,. \]
In fact,  under this assumption, the divergence structure equation \eqref{dvrg} can be transformed, up to a customary regularization procedure, see for instance \cite{ANJAM}, into the non--divergence form appearing in Theorem \ref{var}, {  
and $W^{1,2}_{loc}$ solutions are indeed $W^{2,2}_{loc}$--regular.
}
\end{remark}
{\bf Acknowledgements} G.A. was supported by 
Universit\`a degli Studi di Trieste FRA 2016, 
V.N. was supported by Fondi di Ateneo Sapienza ``Metodi di Analisi Reale e Armonica per problemi stazionari ed evolutivi''.

\end{document}